%% file: GO_Wilks_problem_2026-07-04__as_submitted_.tex
\documentclass[12pt,authoryear]{elsarticle}
\journal{}

\makeatletter
\def\ps@pprintTitle{%
 \let\@oddhead\@empty
 \let\@evenhead\@empty
 \def\@oddfoot{\hfill\thepage}%
 \let\@evenfoot\@oddfoot}
\makeatother

\usepackage{amsmath,amssymb,amsfonts,amsthm,graphicx}
\usepackage[bookmarksnumbered,colorlinks=true]{hyperref}
\usepackage[labelfont=bf]{caption}

\providecommand{\doi}[1]{\href{https://doi.org/#1}{DOI:#1}}
\usepackage{xurl} 
\renewcommand{\doi}[1]{%
 \href{https://doi.org/#1}{\nolinkurl{DOI:#1}}%
}

\usepackage{dsfont} 
\usepackage{enumitem} 
\usepackage{mathtools} 
\usepackage{geometry} 
\numberwithin{equation}{section}
\numberwithin{table}{section}
\numberwithin{figure}{section}

\theoremstyle{plain}
\newtheorem{theorem}{Theorem}[section]

\theoremstyle{definition}
\newtheorem{remark}{Remark}[section]

\newcommand{\N}{\mathbb{N}}
\newcommand{\R}{\mathbb{R}}
\newcommand{\EE}{\mathsf{E}} 
\newcommand{\bb}[1]{\boldsymbol{#1}}
\newcommand{\rd}{\mathrm{d}}
\newcommand{\e}{\varepsilon}
\newcommand{\tr}{\mathrm{tr}}
\newcommand{\etr}{\mathrm{etr}}
\newcommand{\leqdef}{\vcentcolon=}

\allowdisplaybreaks

\begin{document}

\begin{frontmatter}

\title{On Wilks' problem: Exact recursive formulas via Stein's method for the joint moments of disjoint principal minors of Wishart random matrices}

\author[a2]{Robert E.\ Gaunt}
\author[a3]{Fr\'ed\'eric Ouimet\corref{mycorrespondingauthor}}

\address[a2]{The University of Manchester, Manchester, M13 9PL, UK}
\address[a3]{Universit\'e du Qu\'ebec \`a Trois-Rivi\`eres, Trois-Rivi\`eres, QC G8Z 4M3, Canada\vspace{-5mm}}

\cortext[mycorrespondingauthor]{Corresponding author. Email address: frederic.ouimet2@uqtr.ca}

\begin{abstract}
In a 1934 {\it Annals of Mathematics} paper, Samuel S.\ Wilks posed the problem of computing joint moments of disjoint principal minors of Wishart random matrices, describing the general case as ``extremely complicated.'' These moments arise in classical multivariate statistics, including multivariate regression, analysis of variance, generalized correlation coefficients, and likelihood ratio testing. We solve Wilks' problem for nonnegative integer exponents and any finite number of disjoint principal minors using Stein's method, specifically the Wishart Stein characterization recently developed by \citet{BaillyGauntOuimetRichardsvonSachs2026}. The Wishart Stein identity yields a degree-lowering recursion that terminates after finitely many steps and evaluates the desired moment exactly. Numerical experiments validate our recursive formulas against direct Monte Carlo simulations from the Wishart distribution.
\end{abstract}

\begin{keyword} 
joint moments, principal minors, Stein's method, Wilks' problem, Wishart distribution
\MSC[2020]{Primary: 60E05; Secondary: 62H10}
\end{keyword}

\end{frontmatter}

\section{Introduction}\label{sec:intro}

The Wishart distribution was introduced by \citet{doi:10.2307/2331939} in his study of empirical covariance matrices from normal multivariate populations. It is the matrix-variate analogue of the gamma distribution and remains one of the most important laws of multivariate statistics, with applications in covariance estimation, multivariate linear models, MANOVA, likelihood ratio testing, Bayesian inference for Gaussian precision matrices, and random matrix theory; see, e.g., \citet{MR652932} and \citet{MR1990662}.

Let $\mathcal{S}^p$, $\mathcal{S}_{+}^p$, and $\mathcal{S}_{++}^p$ denote the sets of real symmetric, nonnegative definite, and positive definite $p\times p$ matrices, respectively. For any square matrix $A$, let $\tr(A)$ be its trace, $\etr(A)\leqdef \exp\{\tr(A)\}$, and $|A|$ its determinant. For $\alpha\in(p-1,\infty)$ and $\Sigma\in\mathcal{S}_{++}^p$, the central Wishart distribution $\mathcal{W}_p(\alpha,\Sigma)$ has density, relative to Lebesgue measure on $\mathcal{S}^p$,
\[
f_{\alpha,\Sigma}(S) = \frac{|S|^{\alpha/2-(p + 1)/2}\etr(-\Sigma^{-1}S/2)}{|2\Sigma|^{\alpha/2}\Gamma_p(\alpha/2)}, \qquad S\in\mathcal{S}_{++}^p,
\]
where $\Gamma_p$ denotes the multivariate gamma function. If a random matrix $\mathfrak{W}$ has this distribution, we write $\mathfrak{W}\sim\mathcal{W}_p(\alpha,\Sigma)$. Its Laplace transform has a well-known explicit expression, namely $\EE\{\etr(-T\mathfrak{W})\}=|I_p + 2T\Sigma|^{-\alpha/2}$ for $T\in\mathcal{S}_{+}^p$. Differentiating this transform gives moments of matrix entries, but it does not provide a tractable procedure for extracting the determinant moments considered in \eqref{eq:intro.Wilks.problem} below. Products of determinant powers are nonlinear functions of many dependent entries, which is the source of the difficulty addressed here.

Let $d,p_1,\ldots,p_d\in\N$, set $p=p_1 + \cdots + p_d$, and partition $\mathfrak{W}=(\mathfrak{W}_{ij})_{1 \leq i,j \leq d}$ and $\Sigma=(\Sigma_{ij})_{1 \leq i,j \leq d}$, where the block $\mathfrak{W}_{ij}$, and similarly $\Sigma_{ij}$, has size $p_i\times p_j$. Thus $\mathfrak{W}_{ii}$ is the $i$-th principal diagonal block and $|\mathfrak{W}_{ii}|$ denotes its determinant. For $\nu_1,\ldots,\nu_d\in[0,\infty)$, Wilks' problem is to evaluate the following joint moments of disjoint principal minors of Wishart random matrices:
\begin{equation}\label{eq:intro.Wilks.problem}
\EE\left[\prod_{i=1}^d |\mathfrak{W}_{ii}|^{\nu_i}\right].
\end{equation}

The question arose from Wilks' work on generalized statistics in multivariate analysis of variance and regression. In his earlier paper \citep{doi:10.2307/2331979} and his 1934 {\it Annals of Mathematics} paper \citep{MR1503165}, Wilks developed moment-generating operators for determinants of product moments in samples from a normal system. The moments in \eqref{eq:intro.Wilks.problem} enter the exact study of generalized likelihood ratio statistics, generalized correlation coefficients, and multivariate analogues of Student-type statistics, and are therefore tied directly to classical sampling theory for multivariate regression, analysis of variance, and tests of independence among several sets of variables.

A closely related but more tractable problem concerns embedded principal minors. For $i\in \{1,\ldots,d\}$, let $\mathfrak{W}_{1:i,1:i}\leqdef (\mathfrak{W}_{rs})_{1 \leq r,s \leq i}$ be the leading principal submatrix formed from the first $i$ block rows and columns, of size $(p_1 + \cdots + p_i)\times(p_1 + \cdots + p_i)$. The embedded problem asks for $\smash{\EE[\prod_{i=1}^d |\mathfrak{W}_{1:i,1:i}|^{\nu_i}]}$. Wilks derived these moments for integer-valued shape parameters using moment-generating operators that commute with integration, Cauchy's expansion for the determinant of a bordered matrix, and Jacobi's identity for complementary minors. His lengthy and technically sophisticated argument predates the modern use of the multivariate gamma function and the theory of hypergeometric functions of matrix argument developed by \citet{MR69960}. \citet{MR181056} later extended the formulas to every real $\alpha>p-1$ using Bartlett's decomposition and zonal polynomials, while \citet{MR4757806} recently gave a streamlined third proof based on iterated Schur complements. The latter proof exploits the nested structure of the leading principal submatrices: each successive determinant is expressed as the determinant of the preceding leading block times the determinant of an independent Wishart Schur complement.

The diagonal blocks in \eqref{eq:intro.Wilks.problem} are independent when $\Sigma$ is block diagonal, in which case the joint moment factors into marginal principal-minor moments. For a general scale matrix, however, the off-diagonal blocks of $\Sigma$ create dependence among the diagonal blocks of $\mathfrak{W}$. Because these diagonal blocks are disjoint rather than nested, applying a Schur complement to one block alters the remaining factors instead of reducing the problem to determinants of independent Wishart matrices. Several related classes of Wishart moments are nevertheless known. A single principal-minor moment formula follows from the marginal Wishart law \citep[see, e.g.,][Theorem~3.3.22]{GuptaNagar2000}; \citet{MR1868979} studied moments of multivariate monomials in the entries of a Wishart matrix; \citet{MR2066255} treated invariant polynomial moments; \citet{MR2458187} derived first- and second-order moments of arbitrary minors; and \citet{doi:10.1007/s40953-021-00267-7,doi:10.1111/sjos.12707} considered expectations of equivariant matrix-valued functions of Wishart and inverse Wishart matrices.

A first major step toward evaluating \eqref{eq:intro.Wilks.problem} was made by \citet{MR4862164}, who solved the case $d=2$ through an explicit formula involving the Gaussian hypergeometric function of matrix argument, ${}_2F_1$. Their result extends the classical bivariate normal absolute-moment formula of \citet{MR45347}. Beyond two blocks, no analogous representation in terms of known special functions is available. This mirrors the Gaussian setting: explicit special-function formulas for joint absolute moments are classical in dimension two, whereas no such formula is known in dimension three or above; the best general results are series expansions of the integral representations defining the moments, which must be truncated for numerical evaluation; see \citet{MR52072} and \citet{MR4219331}. By contrast, the recursive formulas developed here for nonnegative integer powers are exact, finite, and numerically computable without series truncation.

Exact evaluations of \eqref{eq:intro.Wilks.problem} are useful beyond historical completeness. They provide cumulants and benchmarks for approximations to likelihood ratio statistics, help quantify generalized correlation measures, and furnish exact test cases for asymptotic theory in multivariate analysis.

Our contribution is to give the first general exact recursive formulas for \eqref{eq:intro.Wilks.problem} when the powers are nonnegative integers. The proof uses Stein's method, specifically the Wishart Stein characterization recently developed by \citet{BaillyGauntOuimetRichardsvonSachs2026}. Applying that identity to homogeneous determinant polynomials yields a degree-lowering recursion that terminates after finitely many steps. The resulting algorithm applies to any finite number of blocks, arbitrary positive block sizes, every $\alpha>p-1$, and every $\Sigma\in\mathcal{S}_{++}^p$.

The remainder of the paper is organized as follows. Section~\ref{sec:definitions} introduces further notation for block matrices, symmetric gradients, and homogeneous polynomials. Section~\ref{sec:result} states and proves the recursive formula. Section~\ref{sec:validation} implements the recursion and validates it numerically against Monte Carlo simulation.

\section{Definitions and notation}\label{sec:definitions}

Let $p_1,\ldots,p_d\in\N$, and set $p\leqdef p_1 + \cdots + p_d$. Whenever $S\in \mathcal{S}^p$ is block-partitioned according to these sizes, we write $S=(S_{rs})_{1 \leq r,s \leq d}$, where $S_{rs}\in \R^{p_r\times p_s}$. Thus $S_{rr}$ is the $r$-th principal diagonal block. Any finite family of pairwise disjoint principal minors can be put in this form after a simultaneous permutation of rows and columns; if the selected index sets do not cover $\{1,\ldots,p\}$, the complement can be added as an extra block with exponent zero. For $\bb{n}=(n_1,\ldots,n_d)\in\N_0^d$, define
\[
P_{\bb{n}}(S)\leqdef \prod_{r=1}^d |S_{rr}|^{n_r}, \qquad S\in \mathcal{S}^p,
\]
and $m_{\bb{n}}\leqdef \sum_{r=1}^d n_r p_r$. For $m\in\N_0$, let $\mathcal{H}_m$ denote the vector space of real homogeneous polynomials of total degree $m$ in the independent entries of a symmetric matrix. By convention, $\mathcal{H}_0$ is the space of constant polynomials.

For a differentiable function $f:\mathcal{S}^p\to\R$, we use the symmetric gradient
\[
\nabla f(S) \leqdef \left(\frac{1}{2}(1 + \delta_{ij})\frac{\partial f}{\partial S_{ij}}(S)\right)_{1\leq i,j\leq p},
\]
where $\delta_{ij}$ denotes the Kronecker delta. Under this convention, each pair $S_{ij}=S_{ji}$, $i\neq j$, represents a single coordinate, so $\partial/\partial S_{ij}=\partial/\partial S_{ji}$ for off-diagonal entries. Thus $\nabla_{ii}=\partial/\partial S_{ii}$, while $\nabla_{ij}=\frac{1}{2}\partial/\partial S_{ij}$ for $i\neq j$.

For a twice differentiable function $f:\mathcal{S}^p\to\R$, we use the notation
\[
\tr\{S\nabla\Sigma\nabla f(S)\} \leqdef \sum_{i,j,k,\ell=1}^p S_{ik}\Sigma_{\ell j}\nabla_{ij}\nabla_{k\ell}f(S), \qquad S\in\mathcal{S}^p.
\]

\section{Result}\label{sec:result}

Let $\alpha\in (p-1,\infty)$ and $\Sigma\in \mathcal{S}_{++}^p$. The Wishart Stein identity used below is a consequence of the forward implication in Corollary~3.3 of \citet{BaillyGauntOuimetRichardsvonSachs2026} and is formulated as follows: if $\mathfrak{W}\sim \mathcal{W}_p(\alpha,\Sigma)$ and $f$ is a real-valued polynomial function on $\mathcal{S}^p$, then
\begin{equation}\label{eq:Wishart.Stein.identity.polynomial}
\EE\left[2 \, \tr\{(\alpha\Sigma-\mathfrak{W})\nabla f(\mathfrak{W})\}  +  4 \, \tr\{\mathfrak{W}\nabla\Sigma\nabla f(\mathfrak{W})\}\right] = 0.
\end{equation}
The restriction to polynomials is sufficient for the moment recursion in Theorem~\ref{thm:Stein.recursion.disjoint.minors}.

Define the degree-lowering operator $\mathcal{R}_{\alpha,\Sigma}$ on polynomials $P:\mathcal{S}^p\to\R$ by
\begin{equation}\label{eq:lowering.operator}
(\mathcal{R}_{\alpha,\Sigma}P)(S) \leqdef \alpha \, \tr\{\Sigma\nabla P(S)\}  +  2 \, \tr\{S\nabla\Sigma\nabla P(S)\}, \qquad S\in \mathcal{S}^p.
\end{equation}
If $P\in\mathcal{H}_m$ with $m\geq 1$, then $\mathcal{R}_{\alpha,\Sigma}P\in\mathcal{H}_{m-1}$.

\begin{theorem}[Stein recursion for products of disjoint principal minors]\label{thm:Stein.recursion.disjoint.minors}
Let $d\in\N$, let $p_1,\ldots,p_d\in\N$, set $p\leqdef p_1 + \cdots + p_d$, and let $\mathfrak{W}=(\mathfrak{W}_{rs})_{1 \leq r,s \leq d}\sim \mathcal{W}_p(\alpha,\Sigma)$, where $\alpha\in (p-1,\infty)$ and $\Sigma\in \mathcal{S}_{++}^p$. For every $\bb{n}=(n_1,\ldots,n_d)\in\N_0^d$, let
\[
P_{\bb{n}}(S) = \prod_{r=1}^d |S_{rr}|^{n_r}, \qquad m_{\bb{n}} = \sum_{r=1}^d n_r p_r.
\]
If $m_{\bb{n}}=0$, then $\EE[P_{\bb{n}}(\mathfrak{W})]=1$. If $m_{\bb{n}}\geq 1$, define recursively
\[
P_{\bb{n}}^{(0)} \leqdef P_{\bb{n}}, \qquad P_{\bb{n}}^{(r + 1)} \leqdef \frac{1}{m_{\bb{n}}-r}\mathcal{R}_{\alpha,\Sigma}P_{\bb{n}}^{(r)}, \qquad r\in\{0,\ldots,m_{\bb{n}}-1\}.
\]
Then $\smash{P_{\bb{n}}^{(r)}\in\mathcal{H}_{m_{\bb{n}}-r}}$ for every $r\in\{0,\ldots,m_{\bb{n}}\}$, the polynomial $\smash{P_{\bb{n}}^{(m_{\bb{n}})}}$ is constant, and
\begin{equation}\label{eq:recursive.moment.formula}
\EE\left[\prod_{r=1}^d |\mathfrak{W}_{rr}|^{n_r}\right] = \EE[P_{\bb{n}}^{(0)}(\mathfrak{W})] = \EE[P_{\bb{n}}^{(1)}(\mathfrak{W})] = \cdots = \EE[P_{\bb{n}}^{(m_{\bb{n}})}(\mathfrak{W})] = P_{\bb{n}}^{(m_{\bb{n}})}.
\end{equation}
Equivalently,
\begin{equation}\label{eq:closed.form.recursion}
\EE\left[\prod_{r=1}^d |\mathfrak{W}_{rr}|^{n_r}\right] = \frac{1}{m_{\bb{n}}!}\left(\mathcal{R}_{\alpha,\Sigma}^{m_{\bb{n}}}P_{\bb{n}}\right)(S),
\end{equation}
for any $S\in \mathcal{S}^p$, because $\mathcal{R}_{\alpha,\Sigma}^{m_{\bb{n}}}P_{\bb{n}}$ is a constant polynomial.
\end{theorem}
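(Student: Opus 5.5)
The argument rests on one identity: for any homogeneous polynomial $P\in\mathcal{H}_m$ with $m\geq 1$,
\begin{equation*}
m\,\EE[P(\mathfrak{W})] = \EE[(\mathcal{R}_{\alpha,\Sigma}P)(\mathfrak{W})].
\end{equation*}
To obtain it, apply the Wishart Stein identity \eqref{eq:Wishart.Stein.identity.polynomial} with $f=P$ and split the first trace into $2\alpha\,\tr\{\Sigma\nabla P\}-2\,\tr\{\mathfrak{W}\nabla P\}$.

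The key observation is Euler's identity for homogeneous functions in the symmetric-gradient convention: $\tr\{S\nabla P(S)\}=mP(S)$. To check it, write
\[
\tr\{S\nabla P\}=\sum_{i,j}S_{ij}\nabla_{ij}P .
\]
The diagonal terms contribute $S_{ii}\,\partial P/\partial S_{ii}$. Each unordered off-diagonal pair $\{i,j\}$ appears twice, each time with factor $\tfrac12$, so it contributes $S_{ij}\,\partial P/\partial S_{ij}$. The total is therefore the ordinary Euler operator in the independent coordinates, which gives $mP$. Substituting this into \eqref{eq:Wishart.Stein.identity.polynomial} yields
\[
\EE[2\alpha\,\tr\{\Sigma\nabla P\}-2mP+4\,\tr\{\mathfrak{W}\nabla\Sigma\nabla P\}]=0,
\]
which is exactly $2\,\EE[\mathcal{R}_{\alpha,\Sigma}P]=2m\,\EE[P]$. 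Integrability is automatic, since Wishart matrices have all polynomial moments.

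Next comes the degree bookkeeping. The determinant $|S_{rr}|$ is homogeneous of degree $p_r$ in the entries, so $P_{\bb{n}}\in\mathcal{H}_{m_{\bb{n}}}$. The operator $\mathcal{R}_{\alpha,\Sigma}$ maps $\mathcal{H}_m$ to $\mathcal{H}_{m-1}$:
\begin{itemize}
\item $\tr\{\Sigma\nabla P\}$ is a constant-coefficient first derivative, so it lowers degree by one;
\item $\tr\{S\nabla\Sigma\nabla P\}$ is a second derivative multiplied by a linear factor $S_{ik}$, so it also lowers degree by one.
\end{itemize}
When $m=1$ this gives a constant, and on constants the operator gives $0$, consistent with $\mathcal{H}_0$. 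I will state this as a short lemma, since the paper asserts it without proof.

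Induction on $r$ then gives $P_{\bb{n}}^{(r)}\in\mathcal{H}_{m_{\bb{n}}-r}$. Applying the key identity to $P_{\bb{n}}^{(r)}$ with $m=m_{\bb{n}}-r\geq 1$ gives $\EE[P_{\bb{n}}^{(r)}]=\EE[P_{\bb{n}}^{(r+1)}]$. Chaining these equalities yields \eqref{eq:recursive.moment.formula}, and the last expectation is the constant $P_{\bb{n}}^{(m_{\bb{n}})}$ itself. Unrolling the normalizations gives
\[
\prod_{r=0}^{m_{\bb{n}}-1}(m_{\bb{n}}-r)=m_{\bb{n}}!,
\]
which is \eqref{eq:closed.form.recursion}. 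The case $m_{\bb{n}}=0$ is trivial because then $P_{\bb{n}}\equiv1$.

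The only delicate point is the symmetric-gradient convention: the factor $\tfrac12(1+\delta_{ij})$ must make Euler's identity come out as exactly $mP$. This is why I will check the diagonal/off-diagonal count carefully, as above, rather than citing Euler's theorem directly.
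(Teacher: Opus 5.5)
Your proposal is correct and follows essentially the same route as the paper. Both combine Euler's identity $\tr\{S\nabla P(S)\}=mP(S)$, in the symmetric-gradient convention, with the Wishart Stein identity to get $\EE[P(\mathfrak{W})]=m^{-1}\EE[(\mathcal{R}_{\alpha,\Sigma}P)(\mathfrak{W})]$, then conclude with the degree-lowering induction and the $m_{\bb{n}}!$ normalization; the only cosmetic difference is that the paper derives Euler's identity by differentiating $P(tS)=t^mP(S)$ at $t=1$, whereas you invoke the classical Euler operator after the same diagonal/off-diagonal count.
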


\begin{proof}
The case $m_{\bb{n}}=0$ is immediate, because then $n_1=\cdots=n_d=0$ and $P_{\bb{n}}\equiv 1$. Assume from now on that $m_{\bb{n}}\geq 1$.

We first prove Euler's homogeneous-function identity (see \eqref{eq:Euler.identity.symmetric.matrices} below) in the present symmetric-matrix notation. Let $P\in\mathcal{H}_m$ with $m\geq 1$. By definition of homogeneity, $P(tS)=t^mP(S)$ for $t>0$ and $S\in \mathcal{S}^p$. Differentiating both sides with respect to $t$ and then setting $t=1$ gives
\begin{equation}\label{eq:Euler.first.side}
\left.\frac{\rd}{\rd t}P(tS)\right|_{t=1} = mP(S).
\end{equation}
On the other hand, the independent coordinates of $S\in\mathcal{S}^p$ are the diagonal entries $S_{ii}$ and the off-diagonal entries $S_{ij}$ with $i<j$. Therefore, by the ordinary chain rule in these coordinates,
\begin{equation}\label{eq:Euler.chain.rule}
\left.\frac{\rd}{\rd t}P(tS)\right|_{t=1} = \sum_{i=1}^p S_{ii}\frac{\partial P}{\partial S_{ii}}(S)  +  \sum_{1\leq i<j\leq p} S_{ij}\frac{\partial P}{\partial S_{ij}}(S).
\end{equation}
We now identify the right-hand side of \eqref{eq:Euler.chain.rule} with $\tr\{S\nabla P(S)\}$. By the definition of the symmetric gradient, $\nabla_{ii}P(S)=\partial P/\partial S_{ii}(S)$ and $\nabla_{ij}P(S)=\nabla_{ji}P(S)=\frac{1}{2}\partial P/\partial S_{ij}(S)$ for $i<j$. Since $S_{ij}=S_{ji}$, it follows that
\[
\begin{aligned}
\tr\{S\nabla P(S)\}
&= \sum_{i=1}^p S_{ii}\nabla_{ii}P(S)  +  \sum_{1\leq i<j\leq p}\{S_{ij}\nabla_{ji}P(S)  +  S_{ji}\nabla_{ij}P(S)\} \\
&= \sum_{i=1}^p S_{ii}\frac{\partial P}{\partial S_{ii}}(S)  +  \sum_{1\leq i<j\leq p} S_{ij}\frac{\partial P}{\partial S_{ij}}(S).
\end{aligned}
\]
Combining this identity with \eqref{eq:Euler.first.side} and \eqref{eq:Euler.chain.rule}, we obtain
\begin{equation}\label{eq:Euler.identity.symmetric.matrices}
\tr\{S\nabla P(S)\} = m P(S), \qquad S\in\mathcal{S}^p.
\end{equation}

We next check that $P_{\bb{n}}$ is homogeneous of degree $m_{\bb{n}}$. For $t>0$,
\[
P_{\bb{n}}(tS) = \prod_{r=1}^d |(tS)_{rr}|^{n_r} = \prod_{r=1}^d |tS_{rr}|^{n_r} = \prod_{r=1}^d t^{n_r p_r} |S_{rr}|^{n_r} = t^{m_{\bb{n}}} P_{\bb{n}}(S),
\]
so $P_{\bb{n}}\in\mathcal{H}_{m_{\bb{n}}}$.

Now let $P\in\mathcal{H}_m$ with $m\geq 1$. Since differentiation lowers total degree by one, $\tr\{\Sigma\nabla P(S)\}$ is homogeneous of degree $m-1$. Similarly, $\nabla\Sigma\nabla P(S)$ involves second derivatives of $P$, and therefore has degree $m-2$ when $m\geq 2$, while multiplication by $S$ raises the degree by one; if $m=1$, the second-derivative term is zero. Consequently, $\tr\{S\nabla\Sigma\nabla P(S)\}$ is homogeneous of degree $m-1$, and hence $\mathcal{R}_{\alpha,\Sigma}P\in\mathcal{H}_{m-1}$.

Applying the Wishart Stein identity \eqref{eq:Wishart.Stein.identity.polynomial} with this polynomial $P$ gives
\[
0 = 2\alpha \, \EE[\tr\{\Sigma\nabla P(\mathfrak{W})\}] - 2 \, \EE[\tr\{\mathfrak{W}\nabla P(\mathfrak{W})\}]  +  4 \, \EE[\tr\{\mathfrak{W}\nabla\Sigma\nabla P(\mathfrak{W})\}].
\]
By Euler's homogeneous-function identity \eqref{eq:Euler.identity.symmetric.matrices}, evaluated at $S=\mathfrak{W}$, we have $\tr\{\mathfrak{W}\nabla P(\mathfrak{W})\}=mP(\mathfrak{W})$. Substitution yields
\[
0 = 2\alpha \, \EE[\tr\{\Sigma\nabla P(\mathfrak{W})\}] - 2m \, \EE[P(\mathfrak{W})]  +  4 \, \EE[\tr\{\mathfrak{W}\nabla\Sigma\nabla P(\mathfrak{W})\}].
\]
Dividing by $2$ and rearranging gives the basic one-step recursion
\begin{equation}\label{eq:one.step.recursion}
\EE[P(\mathfrak{W})] = \frac{1}{m}\EE[(\mathcal{R}_{\alpha,\Sigma}P)(\mathfrak{W})].
\end{equation}

We now apply \eqref{eq:one.step.recursion} successively. By construction, $\smash{P_{\bb{n}}^{(0)}\in\mathcal{H}_{m_{\bb{n}}}}$. Since $\mathcal{R}_{\alpha,\Sigma}$ lowers degree by one, an induction gives $\smash{P_{\bb{n}}^{(r)}\in\mathcal{H}_{m_{\bb{n}}-r}}$ for every $r\in\{0,\ldots,m_{\bb{n}}\}$. For $r\in\{0,\ldots,m_{\bb{n}}-1\}$, applying \eqref{eq:one.step.recursion} to $\smash{P=P_{\bb{n}}^{(r)}}$, whose degree is $m_{\bb{n}}-r$, gives
\[
\EE[P_{\bb{n}}^{(r)}(\mathfrak{W})] = \frac{1}{m_{\bb{n}}-r}\EE[(\mathcal{R}_{\alpha,\Sigma}P_{\bb{n}}^{(r)})(\mathfrak{W})] = \EE[P_{\bb{n}}^{(r + 1)}(\mathfrak{W})].
\]
Iterating this equality proves \eqref{eq:recursive.moment.formula}. Finally, after $m_{\bb{n}}$ applications of $\mathcal{R}_{\alpha,\Sigma}$, the polynomial has degree zero, so $\smash{P_{\bb{n}}^{(m_{\bb{n}})}}$ is constant and its expectation is the constant itself. The closed expression \eqref{eq:closed.form.recursion} follows immediately.
\end{proof}

\begin{remark}\label{rem:algorithm.polynomial.space}
The recursion in Theorem~\ref{thm:Stein.recursion.disjoint.minors} is finite because $P_{\bb{n}}$ is a polynomial. The intermediate polynomials $\smash{P_{\bb{n}}^{(r)}}$ need not remain products of principal determinants; in general, they are polynomials in the entries of the full block matrix $S$. Thus the natural closed state space for the recursion is the space of homogeneous polynomials, not only the smaller family $\{\prod_{r=1}^d |S_{rr}|^{n_r}:\bb{n}\in\N_0^d\}$.
\end{remark}

\begin{remark}\label{rem:integer.powers}
The theorem is stated for nonnegative integer powers. For arbitrary real powers, the same Stein identity can still be applied under suitable integrability assumptions, but the derivative of $\prod_{r=1}^d |S_{rr}|^{\nu_r}$ introduces inverse block matrices and the degree-lowering polynomial recursion above no longer terminates in finitely many steps.
\end{remark}

\section{Numerical validation}\label{sec:validation}

We validate Theorem~\ref{thm:Stein.recursion.disjoint.minors} using randomly generated parameter configurations. Each configuration is determined by the Wishart parameters $(\alpha,\Sigma)$, the power vector $\bb{n}=(n_1,\ldots,n_d)$, and the block-size vector $\bb{p}=(p_1,\ldots,p_d)$, where $p_1 + \cdots + p_d=p$. The target quantity is
\[
M(\alpha,\Sigma,\bb{n},\bb{p}) \leqdef \EE\left[\prod_{r=1}^d |\mathfrak{W}_{rr}|^{n_r}\right], \qquad \mathfrak{W}\sim\mathcal{W}_p(\alpha,\Sigma).
\]

For each configuration, the moment is computed in two ways. The first is the exact recursive method of Theorem~\ref{thm:Stein.recursion.disjoint.minors}, which starts from $P_{\bb{n}}$, applies the degree-lowering operator $\mathcal{R}_{\alpha,\Sigma}$ exactly $m_{\bb{n}}=\sum_{r=1}^d n_rp_r$ times, and returns the final constant polynomial. The second is a direct Monte Carlo estimate based on independent Wishart samples. Monte Carlo estimates are computed using $10^3$, $10^5$, and $10^7$ replications to examine their convergence toward the exact recursive value as the sample size increases, and determinants are evaluated on a logarithmic scale for numerical stability.

The tables report, for each random configuration, the block-size vector $\bb{p}$, the power vector $\bb{n}$, the shape parameter $\alpha$, the condition number of $\Sigma$, the recursive value, the three Monte Carlo estimates, and their corresponding relative errors. For $N\in\{10^3,10^5,10^7\}$, the relative error is defined by
\begin{equation}\label{eq:RE}
\mathrm{RE}(N) \leqdef \frac{|M_{\mathrm{rec}}-\widehat M_{\mathrm{MC},N}|}{\max\{|M_{\mathrm{rec}}|,\e_{\mathrm{mach}}\}},
\end{equation}
where $\e_{\mathrm{mach}}$ denotes machine precision.

The vectors $\bb{p}$ and $\bb{n}$ are generated by rejection sampling. At each proposal step, the components of $\bb{p}$ are sampled independently and uniformly from $\{2,3,4\}$, while those of $\bb{n}$ are sampled independently and uniformly from $\{1,2,3,4\}$. The proposed pair $(\bb{p},\bb{n})$ is accepted only if $m_{\bb{n}}\leq 10$ when $d=2$, and only if $m_{\bb{n}}\leq 12$ when $d=3$ or $d=4$, to keep the computation time reasonable. Once $\smash{p=\sum_{r=1}^d p_r}$ is determined, the shape parameter is generated as $\alpha=p-1+U$, where $U$ is uniformly distributed on $(1,60)$. To generate $\Sigma$, recall that $\kappa(\Sigma)\leqdef \lambda_{\max}(\Sigma)/\lambda_{\min}(\Sigma)$ denotes its spectral condition number. A target value $K$ for $\kappa(\Sigma)$ is generated by sampling $\log K$ uniformly on $[\log(1.25),\log(50)]$. A random orthogonal matrix $Q$ is obtained from the QR decomposition of a matrix with independent standard normal entries. Before rescaling, the smallest and largest eigenvalues are set equal to $1$ and $K$, respectively, while the remaining eigenvalues are sampled independently on the logarithmic scale between $1$ and $K$. The complete collection of eigenvalues is then randomly permuted and multiplied by a common factor chosen so that the logarithm of the geometric mean of the eigenvalues is uniformly distributed on $[\log(0.5),\log(2)]$. Finally, $\Sigma$ is defined by $\Sigma=Q\Lambda Q^\top$, where $\Lambda$ is the diagonal matrix containing the resulting eigenvalues.

Table~\ref{tab:wilks.validation.b2} gives the comparison for two disjoint principal minors. This case is especially useful as a benchmark, because an independent exact formula involving the Gaussian hypergeometric function of matrix argument is available when $d=2$, due to \citet{MR4862164}. For general real exponents, numerical evaluation of this hypergeometric function is commonly based on truncating its defining series and is therefore approximate. For the nonnegative integer powers considered here, however, its two upper parameters are negative integers, so the series terminates and no truncation error is introduced. We nevertheless omit this additional computation from Table~\ref{tab:wilks.validation.b2} in order to retain a uniform comparison between the recursive and Monte Carlo methods across all values of $d$. Separate numerical experiments (not reported here) confirm agreement to machine precision between the new recursive method and the formula of \citet[Theorem~3.1]{MR4862164} when $d=2$.

Tables~\ref{tab:wilks.validation.b3} and~\ref{tab:wilks.validation.b4} give the corresponding comparisons for three and four disjoint principal minors, respectively, for which no comparable closed formula is currently available. The agreement between the recursive and Monte Carlo values, together with the overall tendency of the relative errors to decrease as the number of Monte Carlo replications increases, supports the implementation of the recursion and the expected convergence of the Monte Carlo estimates.

\vspace{1mm}
\input{R_code/wilks_validation_b2_paper.tex}

\input{R_code/wilks_validation_b3_paper.tex}

\input{R_code/wilks_validation_b4_paper.tex}

\section*{Funding}
\addcontentsline{toc}{section}{Funding}

Robert E.\ Gaunt is funded by EPSRC grant EP/Y008650/1. Fr\'ed\'eric Ouimet is supported by the Natural Sciences and Engineering Research Council of Canada (NSERC) through Discovery Grant RGPIN-2026-04471 and Discovery Launch Supplement DGECR-2026-00449.

\section*{References}
\addcontentsline{toc}{section}{References}

\setlength{\bibsep}{0pt plus 0ex}

\bibliographystyle{plainnat}
\bibliography{bib}

\end{document}

%% file: R_code/wilks_validation_b2_paper.tex
\begin{table}[!ht]
\centering
\caption{Numerical validation of Wilks moments for $d = 2$ disjoint principal minors. The recursive column is the exact value obtained from Theorem~\ref{thm:Stein.recursion.disjoint.minors}; the Monte Carlo columns use $10^3$, $10^5$, and $10^7$ replications, respectively. Here $\mathrm{RE}$ denotes the relative error defined in \eqref{eq:RE}.}
\footnotesize
\label{tab:wilks.validation.b2}
\begingroup
\renewcommand{\arraystretch}{1}
\setlength{\tabcolsep}{2pt}
\begin{tabular}{crrrrrrrrrrr}
\hline
Case & $\bb p$ & $\bb n$ & $\alpha$ & $\kappa(\Sigma)$ & Recursive & MC($10^3$) & MC($10^5$) & MC($10^7$) & RE($10^3$) & RE($10^5$) & RE($10^7$)
\\
\hline
1 & 4,2 & 1,2 & 51.71 & 21.62 & 5.8323e+12 & 5.8600e+12 & 5.8320e+12 & 5.8318e+12 & 0.0047 & 5.8090e-05 & 8.7683e-05 \\
2 & 2,2 & 4,1 & 61.68 & 5.54 & 2.5107e+19 & 2.6257e+19 & 2.5268e+19 & 2.5106e+19 & 0.0458 & 0.0064 & 2.8303e-05 \\
3 & 4,3 & 1,1 & 15.62 & 1.85 & 2.5259e+07 & 2.6111e+07 & 2.5192e+07 & 2.5265e+07 & 0.0337 & 0.0027 & 2.3816e-04 \\
4 & 2,2 & 2,2 & 39.56 & 47.75 & 2.8244e+16 & 2.9893e+16 & 2.8526e+16 & 2.8224e+16 & 0.0584 & 0.0100 & 7.1199e-04 \\
5 & 4,3 & 1,1 & 62.42 & 2.27 & 3.9106e+10 & 3.9488e+10 & 3.9110e+10 & 3.9102e+10 & 0.0098 & 1.0813e-04 & 9.0291e-05 \\
6 & 3,4 & 2,1 & 62.07 & 15.53 & 2.2785e+16 & 2.3260e+16 & 2.2735e+16 & 2.2775e+16 & 0.0208 & 0.0022 & 4.5215e-04 \\
7 & 2,2 & 1,4 & 62.43 & 2.22 & 2.6083e+17 & 2.6869e+17 & 2.6140e+17 & 2.6067e+17 & 0.0301 & 0.0022 & 6.2915e-04 \\
8 & 2,4 & 1,2 & 56.25 & 4.75 & 2.6014e+19 & 2.7610e+19 & 2.5902e+19 & 2.6024e+19 & 0.0613 & 0.0043 & 3.6102e-04 \\
9 & 2,3 & 2,2 & 32.35 & 12.53 & 1.1761e+18 & 1.1933e+18 & 1.1819e+18 & 1.1760e+18 & 0.0145 & 0.0049 & 1.0415e-04 \\
10 & 3,3 & 1,2 & 8.31 & 2.66 & 2.0453e+06 & 1.9078e+06 & 2.0235e+06 & 2.0466e+06 & 0.0672 & 0.0106 & 6.6288e-04 \\
\hline
\end{tabular}
\endgroup
\end{table}

%% file: R_code/wilks_validation_b3_paper.tex
\begin{table}[!ht]
\centering
\caption{Numerical validation of Wilks moments for $d = 3$ disjoint principal minors. The recursive column is the exact value obtained from Theorem~\ref{thm:Stein.recursion.disjoint.minors}; the Monte Carlo columns use $10^3$, $10^5$, and $10^7$ replications, respectively. Here $\mathrm{RE}$ denotes the relative error defined in \eqref{eq:RE}.}
\footnotesize
\label{tab:wilks.validation.b3}
\begingroup
\renewcommand{\arraystretch}{1}
\setlength{\tabcolsep}{2pt}
\begin{tabular}{crrrrrrrrrrr}
\hline
Case & $\bb p$ & $\bb n$ & $\alpha$ & $\kappa(\Sigma)$ & Recursive & MC($10^3$) & MC($10^5$) & MC($10^7$) & RE($10^3$) & RE($10^5$) & RE($10^7$)
\\
\hline
1 & 2,2,2 & 1,1,1 & 13.35 & 1.80 & 7.5337e+06 & 7.1858e+06 & 7.5072e+06 & 7.5275e+06 & 0.0462 & 0.0035 & 8.1792e-04 \\
2 & 3,2,2 & 1,2,1 & 18.74 & 1.48 & 6.0604e+12 & 6.0622e+12 & 6.0677e+12 & 6.0582e+12 & 2.9444e-04 & 0.0012 & 3.5853e-04 \\
3 & 3,3,2 & 1,1,2 & 66.94 & 5.30 & 1.1176e+18 & 1.1169e+18 & 1.1132e+18 & 1.1174e+18 & 5.8431e-04 & 0.0039 & 1.5759e-04 \\
4 & 3,2,2 & 2,1,1 & 26.75 & 44.39 & 2.7288e+17 & 2.6060e+17 & 2.7220e+17 & 2.7287e+17 & 0.0450 & 0.0025 & 4.2188e-05 \\
5 & 2,3,2 & 1,2,1 & 33.92 & 2.99 & 1.3337e+18 & 1.3627e+18 & 1.3313e+18 & 1.3343e+18 & 0.0217 & 0.0018 & 3.9762e-04 \\
6 & 4,2,2 & 1,1,1 & 34.55 & 23.96 & 8.8707e+11 & 9.4813e+11 & 8.8790e+11 & 8.8757e+11 & 0.0688 & 9.3506e-04 & 5.6347e-04 \\
7 & 2,4,2 & 1,1,1 & 26.69 & 15.73 & 4.1555e+13 & 4.1577e+13 & 4.1524e+13 & 4.1559e+13 & 5.2667e-04 & 7.5492e-04 & 9.5061e-05 \\
8 & 2,3,2 & 2,1,2 & 44.04 & 10.82 & 1.0483e+17 & 1.0275e+17 & 1.0493e+17 & 1.0484e+17 & 0.0198 & 9.9133e-04 & 1.4182e-04 \\
9 & 2,2,2 & 3,1,2 & 24.76 & 9.97 & 1.3192e+15 & 1.1819e+15 & 1.3248e+15 & 1.3184e+15 & 0.1041 & 0.0043 & 5.7245e-04 \\
10 & 2,2,2 & 2,3,1 & 31.88 & 29.89 & 5.0062e+17 & 5.3702e+17 & 5.0094e+17 & 5.0094e+17 & 0.0727 & 6.4657e-04 & 6.5184e-04 \\
\hline
\end{tabular}
\endgroup
\end{table}

%% file: R_code/wilks_validation_b4_paper.tex
\begin{table}[!ht]
\centering
\caption{Numerical validation of Wilks moments for $d = 4$ disjoint principal minors. The recursive column is the exact value obtained from Theorem~\ref{thm:Stein.recursion.disjoint.minors}; the Monte Carlo columns use $10^3$, $10^5$, and $10^7$ replications, respectively. Here $\mathrm{RE}$ denotes the relative error defined in \eqref{eq:RE}.}
\footnotesize
\label{tab:wilks.validation.b4}
\begingroup
\renewcommand{\arraystretch}{1}
\setlength{\tabcolsep}{2pt}
\begin{tabular}{crrrrrrrrrrr}
\hline
Case & $\bb p$ & $\bb n$ & $\alpha$ & $\kappa(\Sigma)$ & Recursive & MC($10^3$) & MC($10^5$) & MC($10^7$) & RE($10^3$) & RE($10^5$) & RE($10^7$)
\\
\hline
1 & 2,3,2,2 & 1,1,1,1 & 27.28 & 4.75 & 7.2115e+12 & 7.3220e+12 & 7.1829e+12 & 7.2076e+12 & 0.0153 & 0.0040 & 5.4261e-04 \\
2 & 2,2,2,2 & 1,2,1,1 & 22.61 & 1.40 & 1.1902e+16 & 1.3067e+16 & 1.1904e+16 & 1.1907e+16 & 0.0979 & 2.1223e-04 & 4.3843e-04 \\
3 & 2,3,2,2 & 1,1,1,1 & 23.79 & 40.19 & 8.3047e+15 & 8.2041e+15 & 8.2796e+15 & 8.3054e+15 & 0.0121 & 0.0030 & 8.8461e-05 \\
4 & 2,2,2,2 & 1,1,2,1 & 23.65 & 1.31 & 1.0388e+14 & 1.0750e+14 & 1.0363e+14 & 1.0382e+14 & 0.0349 & 0.0024 & 6.1552e-04 \\
5 & 2,2,2,3 & 1,1,1,1 & 27.25 & 17.16 & 8.4051e+16 & 8.8546e+16 & 8.3994e+16 & 8.4075e+16 & 0.0535 & 6.7176e-04 & 2.8379e-04 \\
6 & 2,2,2,2 & 2,1,1,1 & 50.95 & 4.79 & 1.6460e+19 & 1.6856e+19 & 1.6493e+19 & 1.6449e+19 & 0.0241 & 0.0020 & 6.9551e-04 \\
7 & 3,2,2,2 & 1,1,1,1 & 34.49 & 3.45 & 2.4793e+12 & 2.5947e+12 & 2.4664e+12 & 2.4794e+12 & 0.0465 & 0.0052 & 5.8522e-05 \\
8 & 2,2,2,2 & 2,1,1,1 & 32.22 & 7.95 & 1.0414e+16 & 1.0240e+16 & 1.0454e+16 & 1.0414e+16 & 0.0167 & 0.0038 & 4.7824e-05 \\
9 & 2,2,2,2 & 2,1,1,1 & 18.80 & 7.41 & 9.6223e+14 & 1.0421e+15 & 9.7000e+14 & 9.6195e+14 & 0.0830 & 0.0081 & 2.9472e-04 \\
10 & 2,2,2,2 & 1,1,1,1 & 23.36 & 2.22 & 3.3027e+11 & 3.3213e+11 & 3.2953e+11 & 3.3010e+11 & 0.0056 & 0.0023 & 5.1926e-04 \\
\hline
\end{tabular}
\endgroup
\end{table}